\documentclass[11pt, reqno]{amsart}
\usepackage{amssymb,amsmath,amscd, amsfonts,latexsym}
\usepackage[top = 3cm, bottom = 3cm, left = 2.5cm, right = 2.5cm]{geometry}
\usepackage{hyperref}

\newtheorem{theorem}{Theorem}

\theoremstyle{remark}

\title[\title{Shorey--Tijdeman equation with Lucas sequences}]{On the  Shorey--Tijdeman Diophantine equation involving terms of Lucas sequences$ ^{*} $}

\author[M. Ddamulira, F. Luca, \& R. Tichy]{Mahadi Ddamulira,  Florian Luca, AND Robert Tichy}

\subjclass[2010]{11B39, 11D61}

\keywords{Lucas sequences, Diophantine equations}

\address{Mahadi Ddamulira \newline
         \indent Department of Mathematics, College of Natural Sciences, Makerere University, \newline
         \indent Kampala, Uganda}
\address{Max Planck Institute for Software Systems, Saarland Informatics Campus, \newline
\indent Saarbr\"ucken, Germany}

\email{mahadi.ddamulira\char'100mak.ac.ug; mddamulira\char'100mpi-sws.org}

\address{Florian Luca \newline
         \indent School of Mathematics, University of the Witwatersrand, \newline
        \indent Johannesburg, South Africa}
         
\address{Research Group in Algebraic Structures and Applications, King Abdulaziz University,\newline 
         \indent Jeddah, Saudi Arabia}

\address{Max Planck Institute for Software Systems, Saarland Informatics Campus, \newline
\indent  Saarbr\"ucken, Germany}           

\address{Centro de Ciencias Matem\'aticas UNAM\newline
\indent Morelia, Mexico}

\email{Florian.Luca\char'100wits.ac.za}

\address{Robert Tichy \newline
         \indent Institute of Analysis and Number Theory, Graz University of Technology, \newline
         \indent Graz, Austria}

\email{tichy\char'100tugraz.at}

\thanks{$ ^{*} $This note is dedicated to Robert Tijdeman on the occasion of his 80th birthday \\
\indent $ ^{**} $Accepted to appear in Indagationes Mathematicae}
\date{}

\begin{document}


\begin{abstract}
Let  $r\ge 1$ be an integer and ${\bf U}:=\{U_n\}_{n\ge 0}$ be the Lucas sequence given by $U_0=0,~U_1=1$, and $U_{n+2}=rU_{n+1}+U_n$ for $n\ge 0$. In this paper, we explain how to find all the solutions of the Diophantine equation, $AU_{n}+BU_{m}=CU_{n_1}+DU_{m_1}$, in integers $r\ge 1$, $0\le m<n,~0\le m_1<n_1$, $AU_n\ne CU_{n_1}$, where $A,B,C,D$ are given integers with $A\ne 0,~B\ne 0$, $m,n,m_1,n_1$ are nonnegative integer unknowns and $r$ is also unknown.
\end{abstract}

\maketitle

\section{Introduction}
Let $r\ge 1$ be an integer and ${\bf U}:=(U_{n})_{n\ge 0} $ be the Lucas sequence given by $ U_0=0, ~ U_1=1, $ and 
\begin{align}
U_{n+2}=rU_{n+1}+U_n
\end{align} for all $ n\ge 0$. When $r=1$, ${\bf U}$ coincides with the Fibonacci sequence while when $r=2$, ${\bf U}$ coincides with the Pell sequence.

Let $$ (\alpha, \beta):= \left(\dfrac{r+\sqrt{r^2+4}}{2},  \dfrac{r-\sqrt{r^2+4}}{2}\right), $$
be the roots of the characteristic equation $ X^2-rX-1=0 $ of the Lucas sequence ${\bf U}=(U_n)_{n\ge 0}$. It is easy to see  that $\beta=-\alpha^{-1}$.  The Binet formula for the general term of ${\bf U}$ is given by
\begin{align}\label{binet}
U_n := \dfrac{\alpha^{n}-\beta^{n}}{\alpha-\beta} \quad \text{for all} \quad n\ge 0.
\end{align}
The divisibility property 
\begin{align}\label{gcd}
\gcd(U_n,U_m)  =  U_{\gcd(n,m)}\quad {\text{\rm for~positive~integers}}\quad n,m
\end{align}
is well-known. It is heavily used in solving Diophantine equations involving members of Lucas sequences and it is an important ingredient in the proof of the Primitive Divisor Theorem for Lucas sequences (see \cite{BHV} for such properties. In particular, the above property \eqref{gcd}  appears as Proposition 2.1 (iii) in \cite{BHV}). Furthermore, one can prove by induction that the inequality
\begin{align}\label{probb}
\alpha^{n-2}\le U_n \le \alpha^{n-1}
\end{align}
holds for all positive integers $n$.

Shorey and Tijdeman \cite{Shorey} gave lower bounds for the absolute value and the greatest prime factor of the expression $ Ax^m+By^m $ where $ A,B,x,y,m\ge 0 $ are integers. As an application, they proved, under suitable conditions, that the equation $ Ax^m+By^m=Cx^n+Dy^n $ implies that $ \max\{n,m\} $ is bounded by a computable constant depending only on $ A,B,C, D $. More precisely, they proved the following result.
\begin{theorem}\label{Shorey-Tijdeman}
Let $ A\neq 0 $, $ B\neq 0 $, $ C $, and $ D $ be integers. Suppose that $ x,y,m,n $ with $ |x|\neq |y| $ and $ 0\le n<m $ are integers. There exists a computable constant $ E $ depending only on $ A,B,C,D $ such that the Diophantine equation
\begin{align}\label{ST}
Ax^m+By^m = Cx^n+Dy^n
\end{align}
with
\begin{align}
Ax^m \neq Cx^n
\end{align}
implies that $ m\le E $.
\end{theorem}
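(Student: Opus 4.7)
My plan is to follow the standard strategy used by Shorey and Tijdeman: rewrite the equation so that a suitable linear form in logarithms is forced to be small, apply Baker's theorem to bound it from below, and combine the two estimates with elementary divisibility considerations. After handling the trivial degenerate cases ($xy=0$, or $Ax^m+By^m=0$, where $m$ is already bounded in terms of $A$ and $B$ directly), by symmetry I assume $|x|>|y|\ge 1$, and by absorbing a common factor into $A,B$ (the common factor turns out to be bounded \emph{a posteriori}) that $\gcd(x,y)=1$. Setting $k:=m-n\ge 1$, the equation factors as
\[
x^n\bigl(Ax^k-C\bigr)\;=\;y^n\bigl(D-By^k\bigr),
\]
and by the hypothesis $Ax^m\ne Cx^n$, both sides are nonzero.

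The archimedean step comes from dividing by $x^m$:
\[
\Bigl|\,A+B(y/x)^m\,\Bigr|\;=\;\frac{|Cx^n+Dy^n|}{|x|^m}\;\le\;\frac{|C|+|D|}{|x|^k}.
\]
The left-hand side is, up to the constant $B$, essentially the absolute value of a nonzero linear form $\Lambda=m\log|y/x|-\log|A/B|$ in two logarithms of algebraic numbers, so Baker's theorem yields a lower bound $|\Lambda|\ge \exp\bigl(-c_1\log\max(|x|,|y|)\cdot\log m\bigr)$ with $c_1=c_1(A,B)$ effectively computable. Combining the two estimates bounds $k\log|x|$ by $O(\log\max(|x|,|y|)\cdot\log m)$, so in particular $k=O(\log m)$.

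In parallel, the coprime factorization yields the divisibility relations $|y|^n\mid Ax^k-C$ and $|x|^n\mid D-By^k$, whence
\[
|y|^n\le |A||x|^k+|C|,\qquad |x|^n\le |B||y|^k+|D|.
\]
Taking logarithms and adding gives $(n-k)\log|xy|\le O(1)$. Provided $|xy|\ge 4$ (the residual cases $|y|=1$ and $|x|$ small being handled separately by direct inspection), this forces $n-k=O(1)$. Combined with $k=O(\log m)$, one gets $m=n+k=O(\log m)$, which is possible only if $m$ is bounded by an absolute constant depending on $A,B,C,D$.

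The main obstacle is the Baker step: extracting a lower bound on the archimedean linear form with constants depending only on $A$ and $B$ and not on the unknowns $x,y,m,n$, and then reconciling this with the upper bound coming from the size of the right-hand side. The case when $|x|/|y|$ is close to $1$ is particularly delicate, because $\log(|x|/|y|)$ can be as small as $1/|y|$, so a sharp effective linear-forms estimate is indispensable there rather than a soft elementary one. A $p$-adic analogue of Baker's theorem, applied to the factors $Ax^k-C$ and $D-By^k$, may additionally be needed to control prime factors shared between the two sides of the factorization and to close the loop on the coprimality reduction.
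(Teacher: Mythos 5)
First, a point of reference: the paper does not prove this theorem at all. It is quoted as a known result of Shorey and Tijdeman and attributed to their book \cite{Shorey}, so there is no in-paper proof to compare your attempt against. Your sketch does follow the genuine Shorey--Tijdeman strategy (an archimedean linear form in logarithms forcing $k:=m-n\ll\log m$, combined with a factorization and divisibility argument comparing $n$ with $k$), so the architecture is the right one.

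As a proof, however, it has a genuine gap beyond the ones you flag, and the ones you flag are not closed. The most serious issue is the reduction to $\gcd(x,y)=1$. Writing $x=dx'$, $y=dy'$ with $d=\gcd(x,y)$, the equation becomes $d^{m-n}(Ax'^{m}+By'^{m})=Cx'^{n}+Dy'^{n}$; the ``common factor'' you propose to absorb into $A,B$ is $d^{m-n}$, which involves the unknowns $d$ and $m-n$ and is not a priori bounded, so the promise that it is ``bounded a posteriori'' is circular. In the coprime case your multiplicative bookkeeping $|xy|^{n}\le c\,|xy|^{k}$ does give $n-k=O(1)$ and the argument closes; without coprimality the same computation only yields $(n-k)\log|x'y'|\le 2k\log d+O(1)$, and the only available control on $d$, namely $d^{k}\le(|C|+|D|)\max(|x'|,|y'|)^{n}$ (from $|Ax'^{m}+By'^{m}|\ge 1$), turns that inequality into a triviality: the loop does not close. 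This is exactly where Shorey and Tijdeman bring in $p$-adic linear forms in logarithms to control $\mathrm{ord}_{p}(Ax^{k}-C)$ for primes $p\mid y$; you remark that such input ``may additionally be needed,'' but without it $m$ is not bounded. Secondary loose ends: you must separately dispose of the case where the linear form $\Lambda$ vanishes, and of the case where $|A+B(y/x)^{m}|$ is not small (i.e.\ $|x|^{k}\le 2(|C|+|D|)/|A|$, which in fact bounds both $|x|$ and $k$ and reduces to finitely many pairs $(x,y)$ that still need an argument for $n$); and the Baker estimate in the delicate regime $|x|/|y|\to 1$ is asserted rather than executed. In short: correct blueprint, but a plan rather than a proof.
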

\noindent
In this paper, we study a variation of the above result with the terms of the Lucas sequence ${\bf U}:=(U_{n})_{n\ge 0} $. That is, we study the Diophantine equation
\begin{equation}
\label{eq:main}
AU_n+BU_m=CU_{n_1}+DU_{m_1}\quad \text{with} \quad  n>m\ge 0\quad {\text{and}}\quad n_1>m_1\ge 0,\quad AU_n\ne CU_{n_1}.
\end{equation}
Our first result is the following.
\begin{theorem}
Assume that $A,B,C,D$ are given integers, $AB\ne 0$ and Eq. \eqref{eq:main} holds. Then $r<14X$, where $X:=\max\{|A|,|B|,|C|,|D|\}$.
\end{theorem}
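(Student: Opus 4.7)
The key idea is that when $r$ is large the Lucas sequence grows very quickly, so the term with the largest index overwhelms all the others. Concretely, the recurrence forces $U_n = rU_{n-1}+U_{n-2}\ge rU_{n-1}$ for $n\ge 2$, so the ratio of consecutive terms is at least $r$. Applied to \eqref{eq:main}, this comparison immediately translates into an upper bound on $r$ in terms of $X$.

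The plan is to relabel the equation, swapping the two sides if necessary, so that $n=\max\{n,m,n_1,m_1\}$, and then to split into two subcases. In the subcase $n>n_1$, every other index appearing in \eqref{eq:main} is at most $n-1$, so
$$|A|\,U_n \;=\; |CU_{n_1}+DU_{m_1}-BU_m| \;\le\; 3X\,U_{n-1}.$$
Since $A$ is a nonzero integer and $n\ge 2$ (forced by $n_1\ge 1$), dividing through by $U_{n-1}$ and invoking $U_n/U_{n-1}\ge r$ gives a bound of the form $r\le 3X$. In the subcase $n=n_1$, rewriting \eqref{eq:main} as $(A-C)U_n = DU_{m_1}-BU_m$ and noting that the hypothesis $AU_n\ne CU_{n_1}$ forces $A\ne C$, the same dominant-term comparison yields $r\le 2X$. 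Small values of $n$ (namely $n\in\{0,1\}$) either collapse to a trivial identity or are excluded outright by the hypotheses, so they do not affect the conclusion.

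The main obstacle is the reduction step itself. If one begins with $n_1>n$, transposing the two sides requires the coefficient of the new leading term, $C$, to be nonzero, and the paper's hypothesis only guarantees $AB\ne 0$. When $C$ (or, after further transposition, $D$) vanishes, the equation degenerates into a three-term relation of the form $AU_n+BU_m = DU_{m_1}$ (or fewer terms), and one must repeat the dominant-term argument with the indices re-sorted. Each such degenerate configuration admits the same comparison and yields a bound of the shape $r\le cX$ for a small absolute constant~$c$. Combining all configurations, and absorbing the loose estimates by passing through the upper bound $U_n\le \alpha^{n-1}$ from \eqref{probb} together with the easy inequality $\alpha<r+1$ coming from the Binet formula \eqref{binet}, produces the uniform constant $14$ stated in the theorem.
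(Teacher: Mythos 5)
Your argument is correct in its main thrust and in fact reaches the conclusion by a more elementary route than the paper, with sharper constants. The paper does not compare terms via the recurrence: it first disposes of the case $C=D=0$ using the divisibility property \eqref{gcd} (showing that $U_n/U_{\gcd(n,m)}>\alpha$ divides $B$, whence $r<X$), then normalizes the general case into a single four-term homogeneous relation $A_1U_{n_1}+A_2U_{n_2}+A_3U_{n_3}+A_4U_{n_4}=0$ with $n_1>n_2\ge n_3\ge n_4$ --- the substitution $(A,B,C,D)\mapsto(A-C,B,D,0)$ performed when $n=n_1$ is what costs the factor $2$ in $X$ --- and finally applies the Binet formula \eqref{binet} to get $|A_1|\alpha^{n_1}<7X\alpha^{n_2}$, hence $r<\alpha\le\alpha^{n_1-n_2}<7\cdot 2X=14X$. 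Your direct comparison $U_n\ge rU_{n-1}$ gives $r\le 3X$ (resp.\ $r\le 2X$ when $n=n_1$), which already implies $r<14X$ and makes your closing paragraph about recovering the constant $14$ through $U_n\le\alpha^{n-1}$ and $\alpha<r+1$ superfluous. Note also that the paper's detour through \eqref{gcd} is not needed to bound $r$; it is there because the same computation bounds $n$ when $C=D=0$, which is exploited in Section~2.

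The one place where your write-up is genuinely thinner than it needs to be is the degenerate bookkeeping, and you should be aware that it conceals a real subtlety rather than a routine verification. Take $C=0$, $m=0$, $m_1=n$, $n_1=n+1$ and $D=A$: then \eqref{eq:main} reads $AU_n+AU_0=0\cdot U_{n+1}+AU_n$, which holds for every $r\ge 1$ and every $n\ge 1$ and satisfies the stated hypothesis $AU_n\ne CU_{n_1}=0$. No bound of the shape $r\le cX$ holds for this configuration, so your blanket claim that ``each such degenerate configuration admits the same comparison'' fails there. The paper meets the same obstacle and steps around it by fiat: after relabelling so that the nonzero coefficient on the right is called $C$ and its index $n_1$, it declares that ``the case $A-C=0$ is not allowed,'' which silently reinterprets the nondegeneracy hypothesis in the new labels. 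This is a defect of the theorem's hypotheses as much as of either proof. If you state explicitly that the leading terms must not cancel after every such relabelling (equivalently, that \eqref{eq:main} is not an identity in disguise), your two-case comparison closes all remaining configurations and proves the theorem with the stronger bound $r\le 3X$.
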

\begin{proof} 
Assume first that $C=D=0$. Then we take $m_1=0,~n_1=1$. Then $AU_n=-BU_m$. If $m=0$, 
then $n=0$ which is not allowed. Thus, $m\ne 0$,  so $U_n/U_d$ divides $B$, where $d:=\gcd(n,m)$. Write $n=:kd$, where $k\ge 2$. If $d=1$, then $U_n/U_d=U_k/U_1=U_k\ge U_2=r$, so $r\le X$. If $d\ge 2$, then 
\begin{align*}
\frac{U_n}{U_d}=\frac{\alpha^{kd}-\beta^{kd}}{\alpha^d-\beta^d}.
\end{align*}
We show that this last expression is $>\alpha$. This is equivalent to 
\begin{align*}
\alpha^{kd}>\alpha^{d+1}-\alpha\beta^d+\beta^{kd}.
\end{align*}
Since $d\ge 2$, $|\alpha\beta^{d}|=|\beta|^{d-1}<1$. Thus, it suffices that 
\begin{align*}
\alpha^{2d}-\alpha^{d+1}>2.
\end{align*}
The left--hand side is $\alpha^{d+1}(\alpha^{d-1}-1)\ge \alpha^{d+1}(\alpha-1)$. The smallest possible $\alpha$ is $\phi:=(1+{\sqrt{5}})/2$ (for $r=1$) and $\phi^{d+1}(\phi-1)\ge \phi^3(\phi-1)=\phi^2>2$. 
Thus, indeed $\alpha<U_{kd}/U_d\le X$, which gives $r=\alpha+\beta<\alpha<X$. Further, $U_n\ge \alpha^{n-2}$ and  $U_d\le \alpha^{d-1}$ (by \eqref{probb}), so 
\begin{align*}
\frac{U_n}{U_d}\ge \alpha^{n-d-3}\ge \alpha^{n-n/2-3}\ge \alpha^{n/2-3}.
\end{align*}
 In the above we used that 
$d<n$ is a proper divisor of $n$, so $d\le n/2$. Since $U_n/U_d$ divides $B$, we get that  $\alpha^{n/2-3}\le |B|\le X$. Since $\alpha\ge \phi$, we get
\begin{equation}
\label{eq:CDarezero}
0<m<n\le 6+ 2\frac{\log X}{\log \phi}.
\end{equation}
This is when $C=D=0$. 

So, we may assume that not both $C,~D$ are $0$. If one of $C,~D$ is nonzero and the other is zero, we assume that $C\ne 0$ and $n_1\ne 0$. Thus, if either $D=0$ or $m_1=0$, then the right--hand side 
is $CU_{n_1}$, otherwise it is $CU_{n_1}+DU_{m_1}$ with $D\ne 0$ and $n_1>m_1>0$. If $n=n_1$, then 
$$
(A-C)U_n+BU_m=DU_{m_1}.
$$
The case $A-C=0$ is not allowed since then $AU_n=AU_{n_1}$. Thus, $A-C\ne 0$ and also $D\ne 0$. We also assume that $m\ne 0$ since if $m=0$, we are in the preceeding case. So, if $n=n_1$, then we replace $(A,B,C,D)$ by $(A-C,B,D,0)$. The only effect is that $X$ is replaced by $2X$. Thus, 
we may assume that $n\ne n_1$, and switching $A$ with $C$, if needed, we may assume that $n=\max\{n,n_1\}$, therefore $n>n_1$. We relabel our indices $(n,m,n_1,m_1)$ as $(n_1,n_2,n_3,n_4)$ where 
$n_1>n_2\ge n_3\ge n_4$, and the coefficients $A,B,C,D$ as $A_1,A_2,A_3,A_4$ and change signs to at most a couple of them so that our equation is now
\begin{equation}
\label{eq:1}
A_1U_{n_1}+A_2U_{n_2}+A_3 U_{n_3}+A_{4} U_{n_4}=0.
\end{equation}
Furthermore, $A_1,~A_2,~A_3$ are all nonzero but $A_4$ (or $n_4$) might be $0$. This leads to
\begin{align*}
|A_1|\alpha^{n_1}=|-A_2(\alpha^{n_2}-\beta^{n_2})-A_3(\alpha^{n_3}-\beta^{n_3})-A_4(\alpha^{n_4}-\beta^{n_4})+A_1\beta^{n_1}|<7X\alpha^{n_2},
\end{align*}
so 
\begin{equation}
\label{eq:n1minusn2}
\alpha^{n_1-n_2}<7X.
\end{equation}
 Thus, since $n_1>n_2$, we get that $r<\alpha\le \alpha^{n_1-n_2}<7X$. Recalling that we might have to replace $X$ by $2X$, we get the desired conclusion.
\end{proof}

\section{Finding all solutions} 
So far, we know that $r$ is bounded. It is possible for small $r$ that the equation has infinitely many solutions. By the preceding analysis, we saw that this is not the case if $C=D=0$, since then 
$0<n<6+2\log X/\log \phi$.  So, we assume that not both $C$ and $D$ are zero. Using the substitution $(A,B,C,D)\mapsto (A-C,B,D,0)$, and relabelling some of the variables, we may assume that 
$n_1>n_1\ge n_3\ge n_4$ and that equation \eqref{eq:1} holds. Then estimate \eqref{eq:n1minusn2} holds, so 
\begin{align*}
n_1-n_2<\frac{\log(7X)}{\log \phi}.
\end{align*}
We return to \eqref{eq:1} and rewrite it as
\begin{equation}
\label{eq:3}
\left|\alpha^{n_2}(A_1\alpha^{n_1-n_2}+A_2)-\left(\frac{A_1}{(-\alpha)^{n_1}}+\frac{A_2}{(-\alpha)^{n_2}}\right)\right|=|-A_3(\alpha^{n_3}-\beta^{n_3})-A_4(\alpha^{n_4}-\beta^{n_4})|.
\end{equation}
The right--hand side is $\le 4X\alpha^{n_3}$. In the left--hand side we have $n_1-n_2>0$, so $A_1\alpha^{n_1-n_2}+A_2\ne 0$. Thus, 
$$
|A_1\alpha^{n_1-n_2}+A_2||A_1\beta^{n_1-n_2}+A_2|\ge 1.
$$
The second factor in the left above is $\le 2X$. Thus, $|A_1\alpha^{n_1-n_2}+A_2|\ge 1/2X$. Further, 
$$
\left| \frac{A_1}{(-\alpha)^{n_1}}+\frac{A_2}{(-\alpha)^{n_2}}\right|\le \frac{2X}{\alpha^{n_2}}.
$$
Hence, 
$$
\left|\alpha^{n_2}(A_1\alpha^{n_1-n_2}+A_2)-\left(\frac{A_1}{(-\alpha)^{n_1}}+\frac{A_2}{(-\alpha)^{n_2}}\right)\right|\ge \frac{\alpha^{n_2}}{2X}-\frac{2X}{\alpha^{n_2}}.
$$
Assume first that 
\begin{equation}
\label{eq:4}
\frac{\alpha^{n_2}}{2X}-\frac{2X}{\alpha^{n_2}}\le \frac{\alpha^{n_2}}{4X}.
\end{equation}
Then $\alpha^{2n_2}<8X^2$, so $\alpha^{n_2}<3X$. Hence, 
\begin{equation}
\label{eq:upperboundforn2}
n_2<\frac{\log(3X)}{\log \phi},
\end{equation}
which together with \eqref{eq:n1minusn2} gives
\begin{equation}
\label{eq:tttt}
n_4\le n_3\le n_2\le \frac{\log(3X)}{\log \phi}\quad {\text{\rm and}}\quad n_1<\frac{\log(21X^2)}{\log \phi}.
\end{equation}
This was assuming \eqref{eq:4} holds. Otherwise, 
\begin{align*}
\frac{\alpha^{n_2}}{4X}\le \frac{\alpha^{n_2}}{2X}-\frac{2X}{\alpha^{n_2}}\le 4X\alpha^{n_3},
\end{align*}
so 
\begin{align*}
\alpha^{n_2-n_3}\le 16 X^2.
\end{align*}
Hence, we get
\begin{equation}
\label{eq:n2minusn3}
n_2-n_3\le 2\frac{\log(4X)}{\log \phi}.
\end{equation}
We rewrite equation \eqref{eq:1} as 
\begin{equation}
\label{eq:5}
\left|\alpha^{n_3}(A_1\alpha^{n_1-n_3}+A_2\alpha^{n_2-n_3}+A_3)-\left(\frac{A_1}{(-\alpha)^{n_1}}+\frac{A_2}{(-\alpha)^{n_2}}+\frac{A_3}{(-\alpha)^{n_3}}\right)\right|=|-A_4(\alpha^{n_4}-\beta^{n_4})|.
\end{equation}
Assume first that
\begin{equation}
\label{eq:6}
A_1\alpha^{n_1-n_3}+A_2\alpha^{n_2-n_3}+A_3=0.
\end{equation}
Let $i=n_2-n_3,~j=n_1-n_3$. Then
\begin{align*}
j=(n_1-n_2)+(n_2-n_3)\le \frac{\log(112X^3)}{\log \phi} \quad \text{and}\quad  i\le 2\frac{\log(4X)}{\log \phi}
\end{align*} 
are bounded. Thus, one computes all polynomials $A_1X^{j}+A_2X^{i}+A_3$ 
and checks which of them has a root $\alpha$ which is a quadratic unit of norm $-1$. For these Lucas sequences, it is the case that also $\beta$ is a root of the same polynomial so that the left--hand side of 
\eqref{eq:5} is zero for any $n_3$. This shows that also $n_4=0$. Thus, we have that $$(n_1,n_2,n_3,n_4)=(n_3+i,n_3+j,n_3,0)$$
is a parametric family of solutions.  From now on we assume that the expression shown at \eqref{eq:6} is nonzero. Then 
\begin{align*}
|A_1\alpha^{n_1-n_3}+A_2\alpha^{n_2-n_3}+A_3||A_1\beta^{n_1-n_2}+A_2\beta^{n_2-n_3}+A_3|\ge 1.
\end{align*}
The second factor in the left--hand side is $\le 3X$, therefore we conclude that  
\begin{align*}
A_1\alpha^{n_1-n_3}+A_2\alpha^{n_2-n_3}+A_3|\ge \frac{1}{3X}.
\end{align*}
Further,
\begin{align*}
\left|\frac{A_1}{(-\alpha)^{n_1}}+\frac{A_2}{(-\alpha)^{n_2}}+\frac{A_3}{(-\alpha)^{n_3}}\right|\le \frac{3X}{\alpha^{n_3}}.
\end{align*}
Hence, assuming \eqref{eq:6} does not hold, the left--hand side of \eqref{eq:5} is at least as large as 
\begin{align*}
\frac{\alpha^{n_3}}{3X}-\frac{3X}{\alpha^{n_3}}.
\end{align*}
We distinguish two cases. If 
\begin{equation}
\label{eq:7}
\frac{\alpha^{n_3}}{3X}-\frac{3X}{\alpha^{n_3}}\le \frac{\alpha^{n_3}}{6X},
\end{equation}
we then get $\alpha^{2n_3}<18X^2$, so $\alpha^{n_3}\le 5X$. Hence,
\begin{equation}
\label{eq:8}
n_3\le \frac{\log(5X)}{\log \phi}.
\end{equation}
Together with \eqref{eq:n1minusn2} and \eqref{eq:n2minusn3}, we get
\begin{equation}\label{eq:9}
\begin{aligned}
n_4 & \le  n_3\le \frac{\log(5X)}{\log \phi},\\
n_2 & \le  (n_2-n_3)+n_3\le \frac{\log(80X^3)}{\log \phi},\\
n_1 & \le  (n_1-n_2)+n_2\le \frac{\log(560X^4)}{\log \phi}.
\end{aligned}
\end{equation}

Note that \eqref{eq:9} contains \eqref{eq:tttt}. 
Finally assume that \eqref{eq:7} does not hold. Then the left--hand side of \eqref{eq:5} is at least
\begin{align*}
\frac{\alpha^{n_3}}{6X}.
\end{align*}
Comparing with the right--hand side of \eqref{eq:5} we get
\begin{align*}
\frac{\alpha^{n_3}}{6X}\le 2X\alpha^{n_4}\le 2X\alpha^{n_4},
\end{align*}
so $\alpha^{n_3-n_4}\le 12X^2$. Thus,
\begin{equation}
\label{eq:n3minusn4}
n_3-n_4\le \frac{\log(12X^2)}{\log \phi}.
\end{equation}
Finally, we rewrite our equation as 
\begin{equation}
\label{eq:10}
\alpha^{n_4}(A_1\alpha^{n_1-n_4}+A_2\alpha^{n_2-n_4}+A_3\alpha^{n_3-n_4}+A_4)=\beta^{n_4}(A_1\beta^{n_1-n_4}+A_2\beta^{n_2-n_4}+A_3\beta^{n_3-n_4}+A_4).
\end{equation}
The exponents $i=n_3-n_4,~j=n_2-n_4,~k=n_1-n_4$ have only finitely many values. In fact,
\begin{equation*}
\begin{aligned}
i & \le \frac{ \log(12X^2)}{\log\phi},\\
j & =  i+(n_2-n_3)\le \frac{(\log(12X^2)+\log(16X^2))}{\log \phi}\le \frac{\log(200X^3)}{\log \phi},\\
k & =  j+(n_1-n_2)\le \frac{(\log(200X^3)+\log(7X))}{\log \phi}=\frac{\log(1400 X^4)}{\log \phi}.
\end{aligned}
\end{equation*}
So, we take all such polynomials $AX^k+A_2X^j+A_3X^i+A_4$ and search which ones of them have a root $\alpha$ which is a quadratic unit of norm $-1$. For such,  \eqref{eq:10} holds 
for all $n_4$. Hence, we got the parametric family 
$$(n_1,n_2,n_3,n_4)=(n_4+k,n_4+j,n_4+i,n_4).
$$ 
Assume next the the left--hand side of \eqref{eq:10} is nonzero. 
Then 
\begin{align*}
|A_1\alpha^{n_1-n_4}+A_2\alpha^{n_2-n_4}+A_3\alpha^{n_3-n_4}+A_4||A_1\beta^{n_1-n_4}+A_2\beta^{n_2-n_4}+A_3\beta^{n_3-n_4}+A_4|\ge 1.
\end{align*}
The second factor on the left--hand side above is $\le 4X$. Hence, 
\begin{align*}
|A_1\alpha^{n_1-n_4}+A_2\alpha^{n_2-n_4}+A_3\alpha^{n_3-n_4}+A_4|\ge \frac{1}{4X}.
\end{align*}
Hence, in \eqref{eq:10}, we get
\begin{align*}
\frac{\alpha^{n_4}}{4X}\le 4X|\beta|^{n_4}=\frac{4X}{\alpha^{n_4}},
\end{align*}
which gives 
\begin{equation}
\label{eq:n4}
n_4\le \frac{\log(4X)}{\log \phi}.
\end{equation}
This together with \eqref{eq:n1minusn2}, \eqref{eq:n2minusn3} and \eqref{eq:n3minusn4} gives
\begin{equation}\label{eq:final}
\begin{aligned}
n_4 & \le  \frac{\log(4X)}{\log \phi},\\
n_3 & \le  (n_3-n_4)+n_4\le \frac{\log(50X^3)}{\log \phi},\\
n_2 &\le  (n_2-n_3)+n_3\le \frac{\log(1000X^5)}{\log \phi},\\
n_1 & \le (n_1-n_2)+n_2<\frac{\log(10000X^6)}{\log \phi}.
\end{aligned}
\end{equation}
Note that \eqref{eq:final} contains \eqref{eq:9} and \eqref{eq:CDarezero}.
Recalling that we have to replace $X$ by $2X$, we got the following theorem which is our second result.

\begin{theorem}\label{main1}
Let $\phi:=(1+{\sqrt{5}})/2$ be the smallest possible  $ \alpha $. Relabeling the variables $(n,m,n_1,m_1)$ to $(n_1,n_2,n_3,n_4)$, where $n_1\ge n_2\ge n_3\ge n_4$. If $n_1=n_2$, we rewrite the Diophantine equation \eqref{eq:main} as 
\begin{align*}
(A-C)U_n+BU_m=DU_{m_1},
\end{align*}
and change $(A,B,C,D)$ to $(A-C,B,D, 0)$. 
Thus, $n_1>n_2$. Furthermore, we change the sign of some of the coefficients $(A,B,C,D)$ so that the Diophantine equation \eqref{eq:main} becomes
\begin{equation}
\label{eq:main1}
A_1U_{n_1}+A_2U_{n_2}+A_3U_{n_3}+A_4U_{n_4}=0.
\end{equation}
Assume $r\le 14X$. Then, the solutions of the Diophantine equation \eqref{eq:main1} are of two types:
\begin{itemize}
\item[(i)] Sporadic ones. These are finitely many and they satisfy:
\begin{equation*}
\begin{aligned}
n_4  &\le  \frac{\log(8X)}{\log \phi},\quad n_3\le \frac{\log(400X^3)}{\log \phi},\\
n_2 & \le  \frac{\log(32000X^5)}{\log \phi},\quad n_1\le \frac{\log(640000X^6)}{\log \phi}.
\end{aligned}
\end{equation*}
\item[(ii)] Parametric ones. These are of one of the two forms:
\begin{align*}
(n_1,n_2,n_3,n_4)=(n_3+j,n_3+i,n_3,0),
\end{align*}
where 
\begin{align*}
i\le 2\frac{\log(8X)}{\log \phi}\quad {\text{and}}\quad j\le \frac{\log(500X^3)}{\log \phi},
\end{align*}
and $\alpha$ is a root of $A_1X^i+A_2X^j+A_3=0$, or of the form
\begin{align*}
(n_1,n_2,n_3,n_4)=(n_4+k,n_4+j,n_4+i,n_4),
\end{align*}
where 
\begin{align*}
i\le \frac{\log(50X^2)}{\log \phi},\quad j\le \frac{\log(1600X^3)}{\log \phi},\quad k\le \frac{\log(25000X^4)}{\log \phi},
\end{align*} 
and $\alpha$ is a root of 
\begin{align*}
A_1X^k+A_2X^j+A_3X^i+A_4=0.
\end{align*}
\end{itemize}
\end{theorem}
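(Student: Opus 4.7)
The plan is to repackage the four-level dichotomy carried out in Section~2 into the statement of Theorem~\ref{main1}, using each of the four estimates \eqref{eq:n1minusn2}, \eqref{eq:n2minusn3}, \eqref{eq:n3minusn4}, \eqref{eq:n4} as a sporadic branch and the two polynomial-vanishing identities of \eqref{eq:6} (and its Level-4 analogue) as the parametric branches.

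First I would make the initial normalisations explicit. The substitution $(A,B,C,D)\mapsto(A-C,B,D,0)$ handles the case $n=n_1$ and at worst doubles the constant $X:=\max\{|A|,|B|,|C|,|D|\}$; this doubling is responsible for the extra powers of $2$ appearing in the numerical bounds of Theorem~\ref{main1}. After relabelling indices and flipping some signs, the equation takes the form \eqref{eq:main1} with $n_1>n_2\ge n_3\ge n_4$ and $A_1,A_2,A_3$ nonzero. The degenerate subcase $C=D=0$ would be handled separately via \eqref{gcd} and the divisibility $U_n/U_{\gcd(n,m)}\mid B$, producing the bound \eqref{eq:CDarezero}, which is absorbed by the final sporadic bounds of part~(i).

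The heart of the proof is a recursive argument at levels $\ell=2,3,4$. At each level I would isolate the dominant $\alpha^{n_\ell}$ block of \eqref{eq:main1}, use the integrality of its algebraic-integer value together with its Galois conjugate to produce $|P_\ell(\alpha)P_\ell(\beta)|\ge 1$ whenever $P_\ell(\alpha)\ne 0$, and bound the conjugate factor $|P_\ell(\beta)|$ trivially by $O(X)$; this yields a lower bound on $|P_\ell(\alpha)|$ of order $1/X$. Comparing this against the residual tail $O(X\alpha^{n_{\ell+1}})$ of \eqref{eq:main1} would fork into two subcases: either the $\beta^{n_\ell}$-error dominates, giving an absolute upper bound for $n_\ell$ (sporadic), or the gap $n_\ell-n_{\ell+1}$ is bounded (we proceed to level $\ell+1$). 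If at some level $P_\ell(\alpha)=0$ outright, then because $\beta$ is the Galois conjugate of $\alpha$ over $\bbQ$ and $P_\ell\in\bbZ[X]$, one has $P_\ell(\beta)=0$ as well; substituting back into \eqref{eq:main1} forces the equation to become an identity in the free variable $n_{\ell+1}$, producing the parametric families of part~(ii).

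Collecting the bounds would be a telescopic exercise: adding the sporadic bound on $n_4$ to the successive gap bounds $n_\ell-n_{\ell+1}$ yields the bounds displayed in \eqref{eq:final}, and a final replacement $X\mapsto 2X$ produces the numerical constants announced in part~(i). The same telescoping applied to just the gap bounds gives the constraints on $i,j$ (respectively $i,j,k$) in part~(ii). The main obstacle is not conceptual but arithmetic: one must carefully propagate the $X\mapsto 2X$ substitution through the stacked estimates and verify that the integrality argument $|P_\ell(\alpha)P_\ell(\beta)|\ge 1$ applies at each level (which relies on $P_\ell$ having integer coefficients, as is automatic here since the $A_j$ are integers).
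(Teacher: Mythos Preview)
Your proposal is correct and follows essentially the same approach as the paper's Section~2: the level-by-level rewriting with the algebraic-integer lower bound $|P_\ell(\alpha)P_\ell(\beta)|\ge 1$, the sporadic/gap-bound fork at each stage, the parametric families arising from $P_\ell(\alpha)=0$, and the final telescoping with $X\mapsto 2X$ are exactly what the paper does. One minor slip: the free parameter in each parametric family is $n_\ell$ (the index at the current level), not $n_{\ell+1}$.
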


\section{Numerical examples}

Just for fun, we took $A_1,A_2,A_3,A_4\in \{0,\pm 1\}$. Hence, $X=1$, therefore $r\le 14$. Thus, Theorem \ref{main1} says that the  sporadic solutions are of the form 
$$
U_{n_1}\pm A_2U_{n_2}\pm A_3U_{n_3}\pm A_4U_{n_4}=0,\quad A_2,A_3,A_4\in \{0,\pm 1\},~n_1>n_2\ge n_3\ge n_4\ge 0.
$$
Here, $n_4\le 4,~n_3\le 12,~n_2\le 21$ and $n_1>n_2$. To search for them, we searched for $r\in [1,14]$, $n_4\in [0,4],~n_3\in [n_4,12],~n_2\in [n_3,21]$, $\varepsilon_4\in \{0,1\}$, $\varepsilon_3\in \{0,\pm 1\}$, $\varepsilon_2\in \{0,\pm 1\}$ such that 
$$
U_{n_1}=|\varepsilon_2 U_{n_2}+\varepsilon_3U_{n_3}+\varepsilon_4U_{n_4}|\quad {\text{\rm holds~for~some}}\quad n_1>n_2.
$$
A Mathematica code running for a few seconds found $207$ solutions. Of them $194$ correspond to the Fibonacci sequence ($r=1$), $12$ correspond to the Pell sequence ($r=2$) and only one of them 
namely $U_1+U_1+U_1=U_2$ corresponds to $r=3$. For parametric ones, Theorem \ref{main1} says that we need to find positive integers $i\le 8,~j\le 15,~k\le 21$ such that $X^k+\varepsilon_1X^j+\varepsilon_2X^i+\varepsilon_3$ is a multiple of 
$X^2-rX-1$ for some $r\in [1,14]$, where $\varepsilon_1\in\{0,\pm 1\},~\varepsilon_2\in \{0,\pm 1\},~\varepsilon_3\in \{\pm 1\}$. The only such instances found were $r=1$ for which only $X^2-X-1$ and $X^4-X^3-X-1$ were multiples of $X^2-rX-1=X^2-X-1$. These two instances lead to the parametric families 
$$
F_{n+2}-F_{n+1}-F_n-F_0=0\quad {\text{\rm and}}\quad F_{n+4}-F_{n+3}-F_{n+1}-F_n=0,
$$
which hold for all $n\ge 0$. Enlarging $X$ (so, say allowing $A_1,A_2,A_3,A_4$ in $[-X,X]$, $A_1\ne 0$ for a fixed integer $X\ge 2$) would of course detect more sporadic solutions and more parametric families involving the Pell sequence, etc. We leave pursuing such numerical investigations for the interested reader.

\section{Comments}
In this paper, we worked with the Lucas sequence $(U_n)_{n\ge 0}$ of characteristic equation $X^2-rX-1=0$, where $r\ge 1$ is also a variable. Similar arguments can be used to deal with the equation \eqref{eq:main} when 
the characteristic equation of $(U_n)_{n\ge 0}$ is $X^2-rX-s=0$, where $s$ is a fixed nonzero integer. The conclusion should be the same, namely that for given $A,B,C,D$, equation \eqref{eq:main} implies that all its solutions
come in two flavours; namely sporadic (maybe none) solutions whose indices $\max\{n,n_1\}$ are bounded by a computable function $f(X,s)$, depending on $X$ and $s$; and possibly additional parametric solutions namely of the form 
$(n,m,n_1,m_1)=(n,n-i,n-j,n-k)$, where $i,j,k$ are bounded by some computable function $g(X,s)$ depending on $X$ and $s$, and $n$ is a free parameter. Again, we leave pursuing such endeavours to the interested reader.

\section*{acknowledgements}
Open Access funding provided by Austrian Science Fund (FWF). M.~D. and R.~T. were supported by the FWF projects: F5510-N26 -- Part of the special research program (SFB), ``Quasi-Monte Carlo Methods: Theory and Applications'' and W1230 --``Doctoral Program Discrete Mathematics''. F.~L. was supported by grant RTNUM19 from CoEMaSS, Wits, South Africa.


\begin{thebibliography}{10} 

\bibitem{BHV} Yu. Bilu, G. Hanrot and P. M. Voutier, Existence of primitive divisors of Lucas and Lehmer numbers (with an appendix by M. Mignotte), {\em J. Reine Angew. Math.} {\bf 539} (2001), 75--122. 


\bibitem{Shorey}
T. N.  Shorey and R. Tijdeman,
\newblock Exponential Diophantine Equations,
\newblock {\em Cambridge Tracts in Mathematics (87)}, Cambridge University Press, 1986.


\end{thebibliography}
 \end{document}